\documentclass[a4paper,10pt]{amsart}

\usepackage{amssymb}
\usepackage{fullpage}
\usepackage{enumerate}
\usepackage[OT4]{fontenc}
%\usepackage{float}
%\usepackage{tikz} %Used for drawing picture in LaTeX
%\usetikzlibrary{arrows}
%\usepackage{polski}

\theoremstyle{definition}
\newtheorem{definition}{Definition}
\newtheorem{example}[definition]{Example}
\theoremstyle{plain}
\newtheorem{theorem}[definition]{Theorem}
\newtheorem*{theorem*}{Main Theorem}

\theoremstyle{remark}
\newtheorem{remark}[definition]{Remark}
\newtheorem{problem}[definition]{Problem}

\DeclareMathOperator{\Ass}{Ass}

\DeclareMathOperator{\reg}{reg}
\DeclareMathOperator{\HF}{HF}
\DeclareMathOperator{\HP}{HP}
\DeclareMathOperator{\aHP}{aHP}
\DeclareMathOperator{\aHF}{aHF}
\DeclareMathOperator{\gin}{gin}

\DeclareMathOperator{\vol}{vol}
\DeclareMathOperator{\conv}{conv}

\DeclareMathOperator{\dpth}{depth}

\def\field{\mathbb{K}}
\def\PP{\mathbb{P}}
\def\RR{\mathbb{R}}
\def\TT{\mathbb{T}}

\def\muh{\widehat{\mu}}

\let\to\longrightarrow

\begin{document}

\title{Asymptotic Hilbert Polynomial and a bound for Waldschmidt constants}

\author{Marcin Dumnicki, {\L}ucja Farnik, Halszka Tutaj-Gasi\'nska}

\thanks{Corresponding author: {\L}ucja Farnik.}
\keywords{symbolic powers, asymptotic invariants}

\subjclass[2010]{13P10; 14N20}

\begin{abstract}

In the paper we give an upper bound for the Waldschmidt constants
of the wide class of ideals. This generalizes the result obtained
by Dumnicki, Harbourne, Szemberg and Tutaj-Gasi{\'n}ska, Adv. Math.
2014, \cite{DHST}. Our bound is given by a root of a suitable derivative of a certain polynomial associated with the asymptotic Hilbert polynomial.
\end{abstract}

\maketitle

%%%%%%%%%%%%%%%%%%%%%%%%%%%%%%%%%%%%%%%%%%%%%%%%%%%%%%%%%%%%%%%%%%%%%%%%%%%%%%%%%%%%%%%%%%%%%
\section{Introduction}

In the recent years the asymptotic invariants of ideals  have
aroused great interest and have been studied by many researchers,
see for example \cite{ELMNP1}, \cite{ELMNP2}, \cite{DSST},
\cite{DST}, \cite{Mayci}, \cite{Maypoints}, \cite{Mus} and others.
 One of
these asymptotic invariants is the so called Waldschmidt constant
of an ideal $I\in\mathbb{K}[\mathbb{P}^n]$ denoted by
$\widehat\alpha(I)$, see for example \cite{BH}, \cite{BocHar10b},
\cite{BCH14}. The constant is the limit of a sequence of quotients
of the initial degrees of the $m$-th symbolic power of the ideal
by $m$ (see Definition \ref{defWald}). Computing this constant is
a hard task in general, as it is difficult to compute the initial
degree of a symbolic power of an ideal. For example, if $I$ is the
ideal of $s$ points in $\mathbb{P}^2$ in generic position, finding
$\widehat\alpha(I)$ means finding the Seshadri constant of these
points. The Seshadri constant is defined as the infimum of the
quotients $\frac{\textrm{deg} C}{m_1+\ldots+m_s}$, where $C$ is a
curve passing through $P_1,\ldots,P_s $ with multiplicities
$m_1,\ldots,m_s$.  By the famous Nagata conjecture (see eg
\cite{DHST} or \cite{primer} and the references therein) we expect
in this situation the equality $\widehat\alpha(I)=\sqrt{s}$, for
$s\geq 10$. Here we know that $\widehat\alpha(I)\leq \sqrt{s}$,
but in general we have no bounds on $\widehat\alpha(I)$ at all.
The result of Esnault and Viehweg in \cite{EV} gives a lower bound
of the Waldschmidt constant of an ideal of distinct points in~$\mathbb{P}^n$. 

In \cite{DHST} the authors give an upper bound of
$\widehat\alpha(I)$ in case $I$ is an ideal of a sum of disjoint
linear subspaces of $\mathbb{P}^n$, see Theorem
\ref{zpracyzBrianem}. In the present paper we generalize this
result and give an upper bound of $\widehat\alpha(I)$ for a wide
class of ideals (namely, radical ideals with linearly bounded
regularity of symbolic powers, see Preliminaries for the
definitions). To find this bound we use $\aHP_I(t)$, the so called
asymptotic Hilbert polynomial of $I$, defined in \cite{DST}. The
bound is given by the root of a suitable differential of the
polynomial $\Lambda_I(t):=\frac{t^n}{n!}-\aHP_I(t)$. The main
result of the present paper is the following theorem:
\begin{theorem*}
Let $I$ be a radical  homogeneous ideal in $\mathbb{K}[\mathbb{P}^n]$ with linearly bounded
regularity of symbolic powers. Assume that in the sequence
$\left\{\dpth I^{(m)}\right\}$ there exists a constant subsequence
of value $n-c$. Then
$$\Lambda^{(c)}_I(\widehat{\alpha}(I))\leq 0,$$ where
$\Lambda^{(c)}_I$ denotes the $c$-th derivative of $\Lambda_I$.

In particular $\widehat{\alpha}(I)\leq \gamma_{\Lambda_I^{(c)}}$,
where $\gamma_{\Lambda_I^{(c)}}$ is the largest real root of the
polynomial $\Lambda_I^{(c)}(t)$.
\end{theorem*}

The paper is organized as follows. In the second section we recall
the necessary notions, in the third we prove the main result. The
fourth section contains some interesting examples. In particular,
in Example~\ref{pochodna} we show  that it is necessary to take the root of a
derivative of the polynomial $\Lambda_I$, not of the polynomial itself, as
$\Lambda_I(\widehat{\alpha}(I))>0$. Example \ref{five-crosses}
shows that  we may get worse bounds on $\widehat{\alpha}(I)$ by
computer-aided computations than by application of  the Main
Theorem.

 \section{Preliminaries}

In the paper \cite{DST} the authors define the so called
asymptotic Hilbert function and asymptotic Hilbert polynomial.
Namely, let $\field$ be an algebraically closed field of
characteristic zero, by $\field[\PP^n]=\field[x_0,\dots,x_{n}]$ we
denote the homogeneous coordinate ring of the projective space
$\PP^n$. Let $I$ be a homogeneous radical ideal in
$\field[\PP^n]$, let $I^{(m)}$ denote its $m$-th symbolic power,
defined as:
$$I^{(m)} = \field[\PP^n] \cap \bigcap_{Q \in \Ass(I)} (I^m)_{Q},$$
where localizations are embedded in a field of fractions of $\field[\PP^n]$ (\cite{Eis95}). By the Zariski-Nagata theorem, for a radical homogeneous ideal $I$
in a polynomial ring over an algebraically closed field, the $m$-th symbolic power $I^{(m)}$ is equal to
$$I^{(m)} = \bigcap_{p \in V(I)} \mathfrak{m}_{p}^m,$$
where $\mathfrak{m}_p$ denotes the maximal ideal of a point $p$, and $V(I)$ denotes the set of zeroes of $I$. In characteristic zero, the symbolic power (of a
radical ideal) can also be described as the set of polynomials which vanish to order $m$ along $V(I)$; this (compare \cite{Sul08}) can be written as:
$$I^{(m)} = \left(f : \frac{\partial^{|\alpha|} f}{\partial x^{\alpha}} \in I \text{ for } |\alpha| \leq m-1 \right).$$

To define the asymptotic Hilbert polynomial, recall that
the Hilbert function $\HF_{I}$ of a homogeneous ideal $I$ is defined as
$$\HF_{I}(t) = \dim_{\field}(\field[\PP^n]_t/I_t).$$
For $t$ big enough the above function behaves as a polynomial, the Hilbert polynomial $\HP_{I}$ of $I$.

Let us define ideals with linearly bounded symbolic regularity (ie satisfying LBSR condition):
\begin{definition}\label{LBSR}
Let $I$ be a homogenous ideal in $\field(\PP^n)$. We say that $I$ \emph{satisfies linearly bounded symbolic regularity}, or \emph{is LBSR} for short, if there exist
constants $a,b>0$ such that
$$\reg\left(I^{(m)}\right)\leq am+b.$$
\end{definition}

It is worth observing, that  we do not know, so far, any example of a homogeneous ideal, which is not LBSR. The list of ideals which are proved  to be LBSR
may be found eg in \cite{DST}.

Now we recall the definitions of the asymptotic Hilbert function and the asymptotic Hilbert polynomial of an ideal $I$.
\begin{definition}
The \emph{asymptotic Hilbert function of $I$} is
$$\aHF_I(t):=\lim_{m\to \infty}\frac{\HF_{I^{(m)}}(mt)}{m^n}$$
in case that the limit exists.
\end{definition}
 In \cite{DST} it is shown that if $I$ is a radical ideal then the limit exists.

\begin{definition}\label{defaHP}
The \emph{asymptotic Hilbert polynomial of $I$} is
$$\aHP_I(t):=\lim_{m\to \infty}\frac{\HP_{I^{(m)}}(mt)}{m^n}$$
in case that the limit exists.
\end{definition}
In \cite{DST} it is shown that if $I$ is a radical LBSR ideal  then the limit exists.

Recall the definition of the Waldschmidt constant of an ideal $I$:

\begin{definition}\label{defWald}
$$\widehat\alpha(I):=\lim_{m\to\infty}\frac{\alpha(I^{(m)})}{m}=\inf_{m\to\infty}\frac{\alpha(I^{(m)})}{m},$$
where $\alpha(J)$ is the least degree of a nonzero polynomial
appearing in $J$ (called the initial degree of $J$).
\end{definition}

In \cite{DHST} the authors defined a polynomial
$\Lambda_{n,r,s}(t)$, namely let $L$ be a sum of $s$ disjoint
linear subspaces of dimension $r$ in $\mathbb{P}^n$ (called a
\emph{flat}; by a \emph{fat flat} we denote such subspaces with
multiplicities), as in \cite{DHST}. Let $I$ be the ideal of $L$.
Define
$$P_{n,r,s,m}(t):=\binom{t+n}{n}-\HP_{I^{(m)}}(t).$$
Substitute $t$ by $mt$ into $P_{n,r,m,s}(t)$ and regard it as a
polynomial in $m$ (this is indeed a polynomial, see~\cite{DHST}).
The leading term of this polynomial is  denoted by
$\Lambda_{n,r,s}(t).$

In \cite{DST} it is shown that
$$\aHP_I(t)=\frac{t^n}{n!}-\Lambda_{n,r,s}(t),$$
where $I$ is the ideal of the fat flat.

In this paper we define $\Lambda_I(t)$ for any radical LBSR ideal as
$$ \Lambda_I(t)=\frac{t^n}{n!}-\aHP_I(t).$$

The main result of our paper gives an upper bound for $\widehat\alpha(I)$
in terms of the largest root of a suitable derivative of $\Lambda_I$.

\section{Main Result}

The main result of our paper is the theorem below, giving an upper
bound for $\widehat\alpha(I)$, where $I$ is radical and
satisfies LBSR condition. Thus, this theorem generalizes the
Theorem 2.5 from \cite{DHST}, where the bound is proved for ideals
of linear subspaces only. More comments on this generalization are
in Remark \ref{o-fat-flatsach}.

\begin{theorem}[Main Theorem]\label{main}
Let $I$ be a radical %(hence saturated) 
homogeneous LBSR ideal.
Assume that in the sequence $\left\{\dpth I^{(m)}\right\}$ there
exists a constant subsequence of value $n-c$. Then
$$\Lambda^{(c)}_I(\widehat{\alpha}(I))\leq 0,$$ where
$\Lambda^{(c)}_I$ denotes the $c$-th derivative of $\Lambda_I$.
\end{theorem}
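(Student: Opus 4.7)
The plan is to use the depth hypothesis to reduce $S/I^{(m_k)}$, along the depth-constant subsequence, to a $1$-dimensional Cohen--Macaulay module by modding out by a regular sequence of $c$ generic linear forms; to apply the standard inequality $\HF\le\HP$ for such modules at the critical degree $\alpha(I^{(m_k)})-1$; and to pass to the limit, in which the $c$-th finite difference of $\HP_{I^{(m_k)}}$ becomes the $c$-th derivative of $\aHP_I$, yielding $\Lambda^{(c)}_I(\widehat{\alpha}(I))\le 0$.

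Concretely, I would first extract the subsequence $(m_k)$ on which $\dpth I^{(m_k)}=n-c$ and recall that $\widehat{\alpha}(I)=\lim_k \alpha(I^{(m_k)})/m_k$, so that it suffices to control this particular limit. For each $k$, the depth hypothesis lets me choose generic linear forms $\ell_1,\dots,\ell_c$ forming a regular sequence on $S/I^{(m_k)}$; setting $\bar{S}_k:=S/(\ell_1,\dots,\ell_c)$ and $N_k:=\bar{S}_k/\bar{I}^{(m_k)}$, where $\bar{I}^{(m_k)}$ is the image of $I^{(m_k)}$ in $\bar{S}_k$, the module $N_k$ will be $1$-dimensional Cohen--Macaulay. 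Since $(I^{(m_k)})_t=0$ for every $t<\alpha_k:=\alpha(I^{(m_k)})$, the ideal $\bar{I}^{(m_k)}$ also vanishes in every degree $<\alpha_k$, and because $\bar{S}_k$ is a polynomial ring in $n+1-c$ variables one has
$$\HF_{N_k}(\alpha_k-1)=\binom{\alpha_k+n-c-1}{n-c}.$$
On the other hand, because $(\ell_1,\dots,\ell_c)$ is regular, the Hilbert series of $N_k$ equals $(1-z)^c$ times that of $S/I^{(m_k)}$, so $\HP_{N_k}(t)=\Delta^c\HP_{I^{(m_k)}}(t)$. Applying the inequality $\HF_{N_k}\le\HP_{N_k}$, which for a $1$-dimensional Cohen--Macaulay module follows from Serre's formula $\HF_M-\HP_M=-\dim H^1_{\mathfrak{m}}(M)_\bullet\le 0$, at $t=\alpha_k-1$ yields
$$\binom{\alpha_k+n-c-1}{n-c}\;\le\;\Delta^c\HP_{I^{(m_k)}}(\alpha_k-1).$$

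Dividing by $m_k^{n-c}$ and letting $k\to\infty$, the left-hand side tends to $\widehat{\alpha}(I)^{n-c}/(n-c)!$ while, by a Taylor expansion based on the identity $\sum_{i=0}^{c}(-1)^{i}\binom{c}{i}i^{j}=0$ for $j<c$, the right-hand side tends to $\aHP^{(c)}_I(\widehat{\alpha}(I))$; rearranging gives $\Lambda^{(c)}_I(\widehat{\alpha}(I))\le 0$. The hard part is arranging the reduction rigorously: one must verify that the generic $\ell_i$ form a regular sequence on $S/I^{(m_k)}$ (which is where the value of $\dpth I^{(m_k)}$ is essential) and that the resulting $N_k$ really is $1$-dimensional Cohen--Macaulay, with the number $n-c$ matching the codimension-like information about $V(I)$; a secondary technical point is the asymptotic identification of $m_k^{c-n}\Delta^c\HP_{I^{(m_k)}}(\alpha_k-1)$ with $\aHP^{(c)}_I(\widehat{\alpha}(I))$, which uses only the polynomial nature of $\HP_{I^{(m_k)}}$ and the standard transition from finite differences to derivatives.
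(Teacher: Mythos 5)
Your overall skeleton does match the paper's: restrict to the depth-constant subsequence, kill $c$ general linear forms (the paper does this one form at a time, by induction on $c$, identifying $\aHP_{\overline{I}}$ with $\aHP_I'$ exactly as in your finite-difference step), evaluate at $t=\alpha(I^{(m_k)})-1$, where the Hilbert function of the quotient is the full $\binom{\alpha_k+n-c-1}{n-c}$, and pass to the limit. The asymptotic bookkeeping at the end (binomial over $m_k^{n-c}$ on the left, $c$-th difference quotient becoming $\aHP_I^{(c)}$ on the right) is the same as in the paper and is fine.

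The genuine gap is your claim that $N_k$ is a $1$-dimensional Cohen--Macaulay module, which is the only thing you offer in support of the key inequality $\HF_{N_k}\le\HP_{N_k}$. Nothing in the hypotheses controls $\dim S/I^{(m_k)}$ or forces Cohen--Macaulayness; the theorem only fixes a depth along a subsequence, and you yourself flagged this as the point needing verification --- it in fact fails. Concretely, for the ideal $I_5$ of five generic crosses in $\PP^3$ (Example \ref{five-crosses}), $S/I_5^{(m)}$ has dimension $2$ and depth $1$ (the support is disconnected), and the theorem is applied there with $c=0$, so $N_k=S/I_5^{(m_k)}$ is $2$-dimensional and not Cohen--Macaulay. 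Serre's formula then reads $\HF-\HP=-\dim H^1_{\mathfrak{m}}(\cdot)_t+\dim H^2_{\mathfrak{m}}(\cdot)_t$, the top local cohomology enters with a positive sign, and the inequality $\HF\le\HP$ no longer follows. This inequality, for $t\ge\alpha(I^{(m)})-1$, is precisely the hard content of the paper's proof, and it is obtained there by a different route: pass to $\gin$ in degrevlex order (so that, via Herzog--Srinivasan, the depth hypothesis controls which variables occur among the generators), and show combinatorially that the monomial ideal $K=\gin(I^{(m)})$ differs from the principal ideal $J$ generated by the gcd of its generators by only a finite set $\Delta$ of monomials, whence $\HP_K(t)=\HF_J(t)+\#\Delta\ge\HF_K(t)$ in the relevant range. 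To repair your proof you would need to replace the local-cohomology step by an argument of this kind, or else add dimension and Cohen--Macaulayness hypotheses, which would prove a strictly weaker statement than the one in question.
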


\begin{proof}

\textbf{Case $c=0$.} In this case there is a subsequence of depth
$n$.

We have to prove  $\Lambda_I(\widehat\alpha(I))\leq 0$, ie that

$$\lim_{m\to \infty}\frac{\binom{n+mt}{n}-\HP_{I^{(m)}}(mt)}{m^n}$$
is less than or equal to zero for $t=\widehat\alpha(I)$.

Recall that
$$\widehat\alpha(I)=\lim_{k\to\infty}\frac{\alpha(I^{(k)})}{k}.$$
Take
$$t_m:=\frac{\alpha(I^{(m)})-1}{m}.$$

If we prove that

$$\Lambda_I(t_m)=\lim_{m\to \infty}\frac{\binom{n+mt_m}{n}-\HP_{I^{(m)}}(mt_m)}{m^n}\leq 0,$$
then we are done  as $\Lambda_I$ is continuous. %i mamy zb. jednostajn¹, bo stopnie wielomianów s¹ globalnie ograniczone

 Observe that
$$ \binom{n+mt_m}{n}-\HF_{I^{(m)}}(mt_m)=\binom{n+\alpha(I^{(m)})-1}{n}-\HF_{I^{(m)}}(\alpha(I^{(m)})-1)= 0$$
from the definition of $\alpha$.

So, it is enough to show that in our case
\begin{equation}\label{eq:HFleqHP}
\HF_{I^{(m)}}(t) \leq \HP_{I^{(m)}}(t)
\end{equation}
for all nonnegative integers $t \geq \alpha({I^{(m)}})-1$.

For this, observe that for any ideal $J$ in $\field[\PP^n]$, we
have that $\HF_J=\HF_{\gin(J)}$, and $\HP_J=\HP_{\gin(J)}$  (where
$\gin(J)$ is the initial ideal of $J$, with respect the degree reverse
lexicographical order, of a generic coordinate change of $J$, cf
the beginning of Section 4). 

Ideal $I$ is radical thus saturated. Since $\dpth \gin(I) = \dpth I=n$, %Eisenbud s. 348+w³asnoœci gin
by Lemma 3.1 from \cite{HerSri}  $\gin(I)$ involves all but one of the variables. Hence it is enough to prove
the claim \eqref{eq:HFleqHP}  for such ideals.

Fix $m$. Let $K=I^{(m)}$. From now on we assume that $K$ is
monomial. Let $M(n)$ denote the set of
monomials in variables $x_0,\dots,x_n$. Observe that
$$\{ \mu \in M(n) : \mu \in K, \, \deg \mu = t \} = \{ \mu \in M(n-1) : \mu \in K, \, \deg \mu \leq t \}.$$
In other words,
$$\HF_{I}(t) = \# \{ \mu \in M(n-1) : \deg \mu \leq t, \, \mu \notin K \}.$$

 Let $K = (\mu_1,\dots,\mu_k)$ where $\mu_1,\dots,\mu_k$ are monomial generators of $K$. Let $\muh := \gcd(\mu_1,\dots,\mu_k)$, let
$J=(\muh)$ be an ideal generated by $\muh$, and let
$$\Delta := \{ \mu \in M(n-1) : \mu \in J \setminus K\}.$$
We will show by induction on $n$ that $\Delta$ is finite. For $n=1$ the claim is obvious (since $K=J$ is this case). So let $n$ be arbitrary. By $\deg_j(\mu)$
we denote the degree of $\mu$ with respect to $x_j$. For each $j=0,\dots,n-1$ let
$$d_j := \max_i \{ \deg_j(\mu_i) \}, \qquad \Delta_j := \Delta \cap \{\mu : \deg_j(\mu) \geq d_j\}.$$
Observe that each $d_j$ is defined, as each variable appears as a factor in some generator. Let $K_j$ be the dehomogenization of $K$ with respect to $x_j$, let
$\muh_j$ be the dehomogenization of $\muh$. Observe that the dehomogenization of each $\mu \in \Delta_j$ belongs to the set $\Delta$ defined for $K_j$ and
$\muh_j$ (the assumption that $\deg_j(\mu) \geq d_j$ plays a crucial role here). Hence, by inductive assumption (and since the degree of $\mu$ is bounded) each
set $\Delta_j$ is finite. Observe also that
$$\Delta \subset \bigcup_{j=0}^{n-1} \Delta_j \cup (\Delta \cap \{ \mu : \deg_j(\mu) \leq d_j \text{ for } j=0,\dots,n-1\}).$$
Each of these sets is finite, hence $\Delta$ is finite, as claimed.

From finiteness of $\Delta$ is follows that, for $t$ big enough (it is enough to take $t$ bigger than the maximum degree of a monomial in $\Delta$),
$$\HF_K(t) = \HF_J(t) + \# \Delta.$$
The same holds for Hilbert polynomials for all $t$. Observe that: %(see eg \cite{EH}, III.3.3):
\begin{equation}\label{eq:HF_J}
\HF_J(t) = \begin{cases}
\binom{n+t}{n} - \binom{n+t-\deg(\muh)}{n} & \text{ for } t \geq \deg(\muh)=\alpha(J), \\
\binom{n+t}{n} & \text{ for } t < \deg(\muh).
\end{cases}
\end{equation}
Therefore, for $t \geq \alpha(J)$, and as a consequence for $t \geq \alpha(K)-1$,
$$\HP_{K}(t) = \HP_{J}(t) + \# \Delta \stackrel{\text{by } \eqref{eq:HF_J}}{=} \HF_{J}(t) + \# \Delta \geq \HF_{K}(t).$$
The last inequality follows from the fact that
$$\{ \mu \in M(n-1) : \mu \notin K\} = \{ \mu \in M(n-1) : \mu \notin J \} \cup \Delta.$$
This ends the proof in case $c=0$.

 \textbf{Case $c= 1$}. Let $I^{(m)}$ be a sequence of ideals in $\field[\PP^n]$ with a subsequence of depth $n-1$. We
restrict ourselves to this subsequence and denote it by $I^{(m)}$. Moreover, by $\overline{I^{(m)}}$ we denote the image of $I^{(m)}$ in $\field[\PP^{n-1}]$.

Observe that for $t \gg 0$ $$\HP_{\overline{I^{(m)}}}(mt)=\HP_{I^{(m)}}(mt)-\HP_{I^{(m)}}(mt-1).$$

Since $\aHP_I(t)$ is a polynomial by \cite[Theorem 13]{DST}, the derivative  $\aHP_I'(t)$ exists and is equal to the limit of the difference quotient. Moreover for any $T\in\mathbb{R}$ and for $t\in[0,T]$, $\aHP_{I}$ is a limit of uniformly convergent polynomials with bounded degrees. Hence
$$\displaystyle\aHP_I'(t)=\lim_{h\to 0}\frac{\aHP_I(t)-\aHP_I(t-h)}{h}=\lim_{h\to 0}\lim_{m\to\infty}\frac{1}{h} \left(\frac{\HP_{I^{(m)}}(mt)}{m^n}- \frac{\HP_{I^{(m)}}(m(t-h))}{m^n}\right)=$$
$$\lim_{m\to\infty}\frac{1}{\frac 1 m} \left(\frac{\HP_{I^{(m)}}(mt)}{m^n}- \frac{\HP_{I^{(m)}}(m(t-h))}{m^n}\right)= \lim_{m\to\infty} \left(\frac{\HP_{I^{(m)}}(mt)}{m^{n-1}}- \frac{\HP_{I^{(m)}}(m(t-h))}{m^{n-1}}\right)=\aHP_{\overline{I}}(t).$$

We proceed by induction on $c$.
\end{proof}

\section{Examples}
In this section we present some interesting examples. There are
two types of examples.

The first type of examples concerns ``crosses". By a ``cross" we
mean two intersecting lines. We show in Example
\ref{five-crosses}, that our theorem gives better bound than those
possible to compute with help of a computer.

The second type of examples are examples of some star
configurations in $\mathbb{P}^4$. In particular, in Example~\ref{pochodna} we show that it is necessary to take the root of a
derivative of the polynomial $\Lambda_I$, not of the polynomial itself. We also formulate a
problem, which may be viewed as a generalization of Nagata and
Nagata-type conjectures, see \cite{DHST}.

In the sequel we will need the notion of a limiting shape and some
results from \cite{DSST} and \cite{DST}. Let us start with
recalling the notion of limiting shapes.

To define the limiting shape, consider first \emph{generic initial
ideal} $\gin(I)$ of $I$, as the initial ideal, with respect the
degree reverse lexicographical order, of a generic coordinate
change of $I$. Galligo \cite{Gal74} assures that for a homogeneous
ideal $I$ and a generic choice of coordinates, the initial ideal
of $I$ is fixed, hence the definition of $\gin(I)$ is correct.

In the next step consider the sequence of monomial ideals
$\gin\left(I^{(m)}\right)$. The $m$-th symbolic power of a radical ideal $I$
is saturated, hence by Green \cite[Theorem 2.21]{Gre98} no minimal
generator of $\gin\left(I^{(m)}\right)$ contains the last variable $x_{n}$.
Therefore these monomial ideals can be naturally regarded as
ideals in $\field[x_0,\dots,x_{n-1}]$. The Newton polytope of a
monomial ideal is defined as a convex hull of the set of
exponents:
$$P(J) := \conv(\{ \alpha \in \RR^n : x^{\alpha} \in J\}).$$

The limiting shape of an ideal $I$ as above is defined as
$$\Delta(I) = \bigcup_{m=1}^{\infty} \frac{P\left(\gin\left(I^{(m)}\right)\right)}{m},$$
(see Mayes \cite{Maypoints}).

Define $ \Gamma_I$ as the closure of the complement of $\Delta(I)$
in $\RR^n_{\geq 0}$.

Theorem 3 in \cite{DST} says that for radical LBSR ideal $I$ and
for $\TT_t = \{ (x_1,\dots,x_n) : x_1 + \ldots + x_n \leq t \}$ we
have
\begin{equation}\label{3inDST}
\aHP_{I}(t) = \vol (\Gamma_I\cap \TT_t), \quad t \gg 0.
\end{equation}

Now we present the first type of examples.
 First, take a ``cross", ie two intersecting lines, in $\mathbb{P}^3$.
A cross is a complete intersection of type~$(2,1)$. From the
results of Mayes \cite{Mayci}, (Theorem 3.1) we have that for the
ideal $I$ of a cross
$$\gin(I^m)=T(m)\times\mathbb{R},$$
where $T(m)$ is a triangle in $\mathbb{R}^2$ with vertices $(0,0),
(m,0), (0,2m)$. From this we have that the asymptotic limiting
shape (see \cite{Mayci} or \cite{DSST})
$$\Gamma_I=T(1)\times \mathbb{R}.$$
Thus we obtain that the asymptotic Hilbert polynomial of a cross in $\mathbb{P}^3$ 
\begin{equation}\label{eq:aHP_cross}
\aHP(t)=t-1.
\end{equation}
Indeed, by equation \eqref{3inDST}  we compute the volume of $\Gamma_I$ cut by the plane $x+y+z=t$, ie
$$\iint_{T(1)}t-(x+y)dxdy.$$

In the examples below we will also need the following, rather obvious
fact: if $Z_I$ and $Z_J$ are two disjoint sets given as zero sets of
radical LBSR  ideals $I$ and $J$ respectively then
\begin{equation}\label{eq:aHP_disjoint}
\aHP_{I\cap J}=\aHP_I+\aHP_J.
\end{equation}
The formula follows from the definitions of the Hilbert polynomial and
the asymptotic Hilbert polynomial, from the exact sequence:
$$0\to J/I\cap J\to R/I\cap J\to (R/I\cap J) / (J/I\cap J) \simeq R/J\to 0$$
and from the fact that for the ideals of disjoint sets
$R/I=(I+J)/I\simeq J/I\cap J$.

Consider  $s$ generic crosses in $\mathbb{P}^3$ with the ideal
$I_s$. Take  the polynomial $\Lambda_s$ (see \eqref{eq:aHP_cross} and \eqref{eq:aHP_disjoint}):
$$\Lambda_s(t)=\frac{t^3}{6}-s(t-1).$$
Denote the largest real root of $\Lambda_s$ by $\gamma_s$. For
$s=2,3,4$ we have that 
$\gamma_2=2.76873...$, $\gamma_3=3.60687...$, $\gamma_4=4.29021...$. As
in the same time $\alpha(I_s)=s$, we have that
$\widehat\alpha(I_s)\leq s$, which is less than the root of~$\Lambda_s$. 

In case $s=5$ the situation is different:
\begin{example}\label{five-crosses} Consider  5 generic crosses in $\mathbb{P}^3$ with the ideal
$I_5$.
Here $\Lambda_5(t)=\frac{t^3}{6}-5(t-1)$ and
$\alpha(I_5)=5$, moreover, using a computer we may check that for
$m=2,\ldots,10$ we still have $\alpha(I_5^{(m)})=5m$ (and then the
time of computations grows rapidly), but from our theorem we
immediately know that $\widehat\alpha(I_5)\leq \gamma_5=4.88447...$.
\end{example}

\begin{remark}
We may want to compute (with help of a computer) the expected
initial degree $e\alpha_m$ of $I^{(m)}$ (in the similar way as it was
done for fat flats in \cite{DHST}). This could be the way to bound
$\widehat\alpha$ by finding the lowest possible term (or infimum)
of $\frac{e\alpha_m}{m}$. There are two problems with this method.
The first is that the expected degree $e\alpha_m$ may go down very
slowly. For five crosses $e\alpha_m=5$ up to $m=12$ and
$\frac{e\alpha_{13}}{13}=\frac{64}{13}=4,92307...> 4.88447...$.

The second problem is more important. We do not know if the
expected initial degree is properly computed. The formula for a dimension  of a system of forms of degree $d$ vanishing along a given set with multiplicity
$m$ may be correct only for $d$ big enough. We have an unpublished result that for a cross in $\mathbb{P}^3$ the formula is correct for $d\geq 2m-2$.
\end{remark}

%%%%%%%%%%%%%%%%%%%%%%%%%%%%%%%%%%%%%%%%%%%%%%%%%%%%%%%%%%%%%%%%%%%%

Now we move to the second type of examples, concerning  star
configurations.

We begin with giving an experimentally found  formula for the
asymptotic Hilbert polynomial of a star configurations given in
$\mathbb{P}^n$ by intersecting every $c$ out of $s$ generic
hyperplanes. We will denote the ideal of  such a configuration by
$I_{c,s,n}$. From Theorem 1.1 in \cite{DSST} we know that
$\Gamma_{I_{c,s,n}}=\Gamma_{I_{c,s,c}}\times \mathbb{R}^{n-c}$,
where $\Gamma_{c,n,c}$ is a simplex in $\mathbb{R}^c$ with
vertices $\frac{s}{c},\frac{s-1}{c-1},\ldots,s-(c-1).$

%%%

Using equation \eqref{3inDST} we see that to compute
$\aHP_{I_{c,s,n}}(t)$ it is enough to compute the volume of
$\Gamma_{I_{c,s,n}}$ cut by the plane $x_1+\ldots+x_n=t$.

Denote
$$a_1=\frac{s}{c},\ a_2=\frac{s-1}{c-1},\ldots,a_c=s-(c-1).$$ The
volume is the integral:
$$\int\limits_0^{a_1}dx_1\int\limits_0^{-\frac{a_2x_1}{a_1}+a_2}dx_2\ldots\int\limits_0^{-\frac{a_cx_1}{a_1}-\ldots-\frac{a_cx_{c-1}}{a_{c-1}}+a_c}dx_c
\int\limits_0^{t-x_1-\ldots-x_c}dx_{c+1}\ldots\int\limits_0^{t-x_1-\ldots-x_{n-2}}(t-x_1-\ldots-x_{n-1})dx_{n-1}.$$

By computing the integral for small values of $n$ and $c$ we found
the formula:
$$\aHP_{I_{c,s,n}}(t)=$$
 $$a_1\cdot a_2\cdots a_c\frac{(n-c)!}{n!}
\left(\binom{n}{0}(-1)^{n-c}\sum^{n-c}+\binom{n}{1}(-1)^{n-c-1}\sum^{n-c-1}\cdot
t+\binom{n}{2}(-1)^{n-c-2}\sum^{n-c-2}\cdot t^2+\right.$$
$$\left. +\ldots+\binom{n}{n-c}(-1)^{0}\sum^{0}\cdot t^{n-c}\right),$$
where the sum $\displaystyle{\sum^k}$ denotes the sum of all
monomials of degree $k$ in variables $a_i$. So far we are not able
to prove the formula.

The next, important example shows  that it is necessary to take an appropriate derivative of the polynomial $\Lambda_I$.
\begin{example}\label{pochodna}
 Take star configurations of lines in $\mathbb{P}^4$, given by
$s$-hyperplanes, $s\geq 4$, with the ideal $I_{3,s,4}$. From the considerations  above we have that
$$\Lambda_{I_{3,s,4}}(t)=\frac{t^4}{24}-\aHP_{I_{3,s,4}}(t)=\frac{t^4}{24}+\frac{1}{864}(-30 s + 67 s^2 - 48 s^3 + 11 s^4) +
\frac{1}{864} (-48 s + 72 s^2 - 24 s^3) t.$$ It is easy to see (with computer's help) that the polynomial $\Lambda_{I_{3,s,4}}(t)$ has no real zeros for
$s\geq 4$. Indeed, the value at the zero of the derivative (minimum point) is greater than  $0.012s^4 $ for $s\geq 10$, and for  $s=4,\ldots,9$ we see by a
direct check that the value at the minimum point is positive. 

The derivative of $\Lambda_{I_{3,s,4}}(t)$ has a root. This root is approximately equal
$\frac{s}{\sqrt[3]{6}}$. The value of $\widehat\alpha(I_{3,s,4})$ equals $\frac{s}{3}$ (see Example 8.3.4 in \cite{primer}).
\end{example}

\begin{remark}\label{o-fat-flatsach}
Theorem 2.5 in \cite{DHST} gives a bound for the Waldschmidt
constant of the ideal of the disjoint sum of linear subspaces
(flats) in $\mathbb{P}^n$. Namely, it says that:
\begin{theorem}\label{zpracyzBrianem}
Let $ n, r, s$ be integers with $n \geq 2r + 1, r \geq  0$ and $s
\geq  1$. Let $I$ be the ideal of $s$ disjoint $r$-planes in
$\mathbb{P}^n$. Then the polynomial $\Lambda_I(t )$ has a single
real root bigger than or equal to $1$. Denote this largest real
root by $\gamma_I$. Then  $\widehat\alpha(I)\leq \gamma_I$.
\end{theorem}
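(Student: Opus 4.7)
The plan is to proceed by induction on $c$, with the base case $c=0$ handled by a direct combinatorial comparison of the Hilbert function and polynomial, and the inductive step reducing to a generic hyperplane section that simultaneously lowers the depth defect by one and differentiates the asymptotic Hilbert polynomial.

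For the base case, put $t_m := (\alpha(I^{(m)}) - 1)/m$, where $m$ runs over a constant subsequence of depth $n$; then $t_m \to \widehat\alpha(I)$, so by continuity of $\Lambda_I$ it is enough to show $\lim_m \Lambda_I(t_m) \leq 0$. By the defining property of the initial degree, $\HF_{I^{(m)}}(\alpha(I^{(m)}) - 1)$ already equals $\binom{n + \alpha(I^{(m)}) - 1}{n}$, so the desired inequality reduces to the pointwise comparison $\HF_{I^{(m)}}(t) \leq \HP_{I^{(m)}}(t)$ for $t \geq \alpha(I^{(m)}) - 1$. Since $\HF$ and $\HP$ are preserved by passage to $\gin$, and since the depth-$n$ hypothesis together with saturation forces $\gin(I^{(m)})$ to involve only $x_0, \dots, x_{n-1}$ by the Herzog--Srinivasan lemma, I may replace $I^{(m)}$ by a monomial ideal $K$ in $n$ variables. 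Setting $\muh$ to be the $\gcd$ of the generators of $K$ and $J = (\muh)$, I would prove by induction on $n$ that the set $\Delta = \{\mu \in M(n-1) : \mu \in J \setminus K\}$ is finite by stratifying via the maximal exponent $d_j$ of $x_j$ appearing in any generator: monomials with $\deg_j \geq d_j$ dehomogenize to the analogous set for the dehomogenization $K_j$ (finite by induction), while those with $\deg_j < d_j$ for every $j$ lie in a bounded box. Given finiteness of $\Delta$, the closed form $\HF_J(t) = \binom{n+t}{n} - \binom{n+t-\deg\muh}{n}$ valid for $t \geq \alpha(J)$, combined with the disjoint decomposition $\{\mu \notin K\} = \{\mu \notin J\} \cup \Delta$, yields $\HP_K(t) = \HF_J(t) + \#\Delta \geq \HF_K(t)$ on the range $t \geq \alpha(K) - 1$.

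For the inductive step, let $\overline{I^{(m)}} \subset \field[\PP^{n-1}]$ denote the image under quotient by a generic linear form, which is a non-zero-divisor by the depth assumption. The short exact sequence of a hyperplane section gives $\HP_{\overline{I^{(m)}}}(t) = \HP_{I^{(m)}}(t) - \HP_{I^{(m)}}(t-1)$ for $t \gg 0$; substituting $mt$ and dividing by $m^{n-1}$ presents the right-hand side as a discrete difference quotient with step $1/m$, so in the limit $m \to \infty$ one finds $\aHP_{\overline I}(t) = \aHP_I'(t)$ and consequently $\Lambda_{\overline I}(t) = t^{n-1}/(n-1)! - \aHP_I'(t) = \Lambda_I'(t)$. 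A generic hyperplane preserves the initial degree of each $I^{(m)}$ and drops depth by exactly one, so the inherited sequence in $\PP^{n-1}$ has depth $(n-1) - (c-1)$ on the same subsequence and $\widehat\alpha(\overline I) = \widehat\alpha(I)$. Applying the inductive hypothesis in $\PP^{n-1}$ with parameter $c-1$ therefore yields $\Lambda_{\overline I}^{(c-1)}(\widehat\alpha(\overline I)) \leq 0$, which is exactly $\Lambda_I^{(c)}(\widehat\alpha(I)) \leq 0$.

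The principal obstacle is the limit interchange identifying $\aHP_{\overline I}$ with $\aHP_I'$: from pointwise convergence of the rescaled polynomials $m^{-n}\HP_{I^{(m)}}(m\cdot)$ to $\aHP_I$, one must deduce that the corresponding discrete difference quotients converge to the derivative. This requires both uniform convergence on compact sets and a uniform bound on the degrees of these polynomials, which are available only thanks to the LBSR hypothesis as established in \cite{DST}. A secondary subtlety is the identification of $\overline{I^{(m)}}$ with the $m$-th symbolic power of the hyperplane section, which should hold generically so that the inductive statement applies literally; otherwise one has to phrase the theorem for suitable sequences of radical ideals rather than for a single ideal. The combinatorial finiteness of $\Delta$ in the base case is a cleaner technical point, resolved by the stratification by maximal exponents described above.
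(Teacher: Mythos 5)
Your proposal is a proof of a different statement: it reproduces, essentially step by step, the paper's own proof of its Main Theorem (Theorem \ref{main}), namely that $\Lambda_I^{(c)}(\widehat\alpha(I))\leq 0$ when the depths of the symbolic powers admit a constant subsequence of value $n-c$. The statement you were asked to prove is Theorem \ref{zpracyzBrianem}, which is Theorem 2.5 of \cite{DHST}, recalled in Remark \ref{o-fat-flatsach} without proof; it concerns $s$ disjoint $r$-planes in $\PP^n$ with $n\geq 2r+1$ and makes two claims that your argument never touches: (i) that $\Lambda_I(t)$ itself has a single real root $\geq 1$, and (ii) that $\widehat\alpha(I)$ is bounded by that root of $\Lambda_I$, i.e.\ of the \emph{undifferentiated} polynomial.

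This is not a cosmetic mismatch. For $r\geq 1$ the relevant depth defect is $c=r>0$, so the argument you give yields only $\Lambda_I^{(r)}(\widehat\alpha(I))\leq 0$, a bound by the largest root of the $r$-th derivative, and there is no general implication from this to a bound by the largest root of $\Lambda_I$. The paper itself flags exactly this point: Remark \ref{o-fat-flatsach} calls it ``in a sense accidental'' that for disjoint flats the bound by the root of $\Lambda_I$ holds, and Example \ref{pochodna} exhibits configurations for which $\Lambda_I$ has \emph{no} real roots at all, so that passing to a derivative is unavoidable in general. Establishing Theorem \ref{zpracyzBrianem} therefore requires an analysis of the specific polynomial $\Lambda_{n,r,s}$ (its root structure under the hypothesis $n\geq 2r+1$, and the comparison of $\widehat\alpha(I)$ with that root), which is the content of the argument in \cite{DHST} and is entirely absent from your proposal. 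What you have written is a reasonable sketch of the proof of Theorem \ref{main} --- including the finiteness of $\Delta$ in the base case and the limit-interchange issue in the inductive step, both of which match the paper --- but it does not prove the quoted theorem.
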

In particular the theorem holds for one linear subspace of
codimension $n-r$. Observe, that it is in a sense accidental that
in this case $\widehat\alpha(I)$ is bounded from above by the root of
$\Lambda_I$, as according to Theorem~\ref{main} we should take the
largest root of the $r$-th derivative of the polynomial.
\end{remark}

Next, we formulate a problem which may be viewed as a generalization of Nagata-type conjectures (see \cite{DHST}). Note, that one may generalize the conjecture of Nagata asking if there exists a number $N_0$ (depending on an algebraic variety $X$), such that for $s\geq N_0$ the Waldschmidt constant of the ideal of $s$ generic
points on $X$ is maximal possible. The original Nagata conjecture says that $N_0=10$ for $\mathbb{P}^2$, and the maximal possible value of $\widehat\alpha$ is
$\sqrt{s}$, ie the largest root of the polynomial $\Lambda$ for these points.

\begin{problem}
Let $X$ be an algebraic variety. Take a radical ideal $I$ in
$\mathbb{K}[X]$. Take the ideal
$$J=\bigcap_{j=1}^s \phi_j(V(I)),$$
where $\phi_j, j=1,\ldots,s$, is a generic change of coordinates.
Then, for $s$ big enough, the Waldschmidt constant of $J$ is
maximal possible, ie equal to the largest root of the suitable
derivative of the polynomial $\Lambda_J(t)=\frac{t^n}{n!}-s\cdot\aHP_I(t)$.
\end{problem}

\textit{Acknowledgements. The first and third authors research was partially supported by the National Science Centre, Poland, grant 2014/15/B/ST1/02197.}

\bigskip \small

\bigskip
   Marcin Dumnicki, {\L}ucja Farnik, Halszka Tutaj-Gasi\'nska,
   Jagiellonian University, Faculty of Mathematics and Computer Science, {\L}ojasiewicza 6, PL-30-348 Krak\'ow, Poland

\nopagebreak
   \textit{E-mail address:} \texttt{Marcin.Dumnicki@uj.edu.pl}

   \textit{E-mail address:} \texttt{Lucja.Farnik@uj.edu.pl}

   \textit{E-mail address:} \texttt{Halszka.Tutaj-Gasinska@uj.edu.pl}

\end{document}